\theoremstyle{plain}
\newtheorem{lemma}{Lemma}
\newtheorem{theorem}[lemma]{Theorem}
\theoremstyle{definition}
\newtheorem{definition}[lemma]{Definition}
\theoremstyle{remark}
\newtheorem*{remark}{Remark}
\DeclareMathOperator{\rad}{rad}
\DeclareMathOperator{\sqf}{sqf}
\DeclareMathOperator{\re}{Re}
\DeclareMathOperator{\res}{res}
\newcommand{\ssum}{\sideset{}{^*}\sum}
\begin{document}

\title{Counting multi-quadratic number fields of bounded discriminant}
\author{Robin Fritsch}
\address{Technical University of Munich}
\email{robin.fritsch@tum.de}
\date{}
\subjclass[2010]{11N45, 11R20}
\keywords{Multi-quadratic number fields, asymptotic counting, bounded discriminant.}

\begin{abstract}
We prove an asymptotic formula for the number of multi-quadratic number fields of bounded discriminant with a power-saving error term. Furthermore, we explicitly calculate the leading coefficient and extend our result to totally real multi-quadratic number fields.
\end{abstract}

\maketitle
 
\section{Introduction}
We consider a special case of the general problem of counting number fields of bounded discriminant. An overview of results regarding this problem is given in the surveys \cite{cohen02} and \cite{cohen06}. Using class field theory, Wright \cite{wright89} derived an asymptotic formula for the number of abelian extensions of a global field with fixed Galois group and bounded discriminant. In particular, the number $N_k(x)$ of number fields with Galois group $(\mathbb{Z}/2)^k$, which are exactly the multi-quadratic number fields, and discriminant $D(K)\leq x$ can thereby be evaluated asymptotically as
$N_k(x) \sim C(k) x^{1/2^{k-1}}(\log x)^{2^k-2}$
with a constant $C(k)\in\mathbb{R}$.

We use elementary methods to prove an asymptotic formula for $N_k(x)$ with a power-saving error term and explicitly calculate the leading coefficient.
Recently, this result has also been published by de la Bretèche et al. \cite{breteche19} generalizing work by Rome \cite{rome17} on biquadratic extensions. Our work was done independently as a bachelor's thesis\footnote{\textbf{Acknowledgement:} I would like to thank Valentin Blomer for
suggesting the topic and for helpful discussion and advice.}  at the University of Göttingen in 2016.
Additionally, we derive the corresponding result when we consider only totally real multi-quadratic number fields.

Let $N_k(x)$ be the number of number fields $K=\mathbb{Q}(\sqrt{a_1},\ldots,\sqrt{a_k})$ with $[K:\mathbb{Q}]=2^k$ and bounded discriminant $D(K)\leq x$. Accordingly, let $N_k^+(x)$ be the number of totally real number fields. The discriminant of such number fields essentially depends on the product $a_1\ldots a_k$ (see Schmal \cite{schmal89}). We use basic combinatorics to determine the number of fields when this product is fixed. More precisely, we count the number of tuples $(a_1,\ldots,a_k)$ which we call \emph{presentation} of a number field. In order not to count different presentations of the same number field multiple times, we define a unique \emph{normal} presentation. From this we can derive the number of fields with bounded discriminant and obtain our main result.
\begin{theorem}\label{theorem:main_result}
For $k\geq 2$ there exists a $d<1$ as well as two monic polynomials $P_k$ and $P_k^+$ of degree $2^k-2$ such that
\begin{equation*}
N_k(x) = C_k x^{1/2^{k-1}} P_k(\log x) + \mathcal{O}\left(x^{d/2^{k-1}}\right)
\end{equation*}
and
\begin{equation*}
N_k^+(x) = \frac{1}{2^k} C_k x^{1/2^{k-1}} P_k^+(\log x) + \mathcal{O}\left(x^{d/2^{k-1}}\right)
\end{equation*}
where
\begin{equation*}
C_k = \frac{4^k+5\cdot 2^k+10}{32(2^k-1)!} \left(\frac{1}{2^k}\right)^{2^k-2} \prod_{j=1}^{k-1} \frac{1}{2^k-2^j} \prod_{p\neq 2}\left(1+\frac{2^k-1}{p}\right) \left(1-\frac{1}{p}\right)^{2^k-1}.
\end{equation*}
\end{theorem}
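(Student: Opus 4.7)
My plan is to reduce the counting to a sum of a multiplicative function over squarefree integers and then extract the asymptotic analytically. By Schmal's theorem \cite{schmal89}, for squarefree $a_1,\ldots,a_k$ multiplicatively independent modulo squares, the discriminant factors as $D(K) = \delta(K)\, A^{2^{k-1}}$, where $A$ is the odd squarefree part of $a_1\cdots a_k$ and $\delta(K)$ depends only on the image of $V = \langle a_1,\ldots,a_k\rangle$ in $\mathbb{Q}_2^*/(\mathbb{Q}_2^*)^2$. This local image takes only finitely many values, which I call $2$-adic types $\tau$, and so the problem stratifies: after picking a unique normal presentation per field, the bound $D(K)\le x$ becomes $A \le X_\tau := (x/\delta_\tau)^{1/2^{k-1}}$ within each type.

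For each type $\tau$, a normal presentation of odd squarefree radical $A$ corresponds to a $k\times\omega(A)$ matrix over $\mathbb{F}_2$ of prime-exponent vectors whose columns are all nonzero (so every $p\mid A$ ramifies) and whose rows are linearly independent (so $V$ has dimension $k$). Counting nonzero columns prime by prime gives the naive local factor $1 + (2^k-1)/p^s$, while the global rank condition is enforced via Möbius inversion over the subspace lattice, contributing only a perturbation that is absolutely convergent in a larger half-plane. The counting function $h_\tau(A)$ is thus essentially multiplicative, and its Dirichlet series takes the shape
\[
H_\tau(s) \;=\; \sum_A h_\tau(A)\,A^{-s} \;=\; \zeta(s)^{2^k-1}\, G_\tau(s),
\]
with $G_\tau$ holomorphic and non-vanishing in a half-plane $\operatorname{Re}(s) > 1/2$. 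Perron's formula combined with the classical zero-free region for $\zeta$ then yields
\[
\sum_{A\le X} h_\tau(A) \;=\; X\, Q_\tau(\log X) + O(X^d)
\]
for some $d < 1$ and $Q_\tau$ of degree $2^k-2$. Summing over types and substituting $X = X_\tau$ produces the asserted asymptotic for $N_k(x)$ with error $O(x^{d/2^{k-1}})$.

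The totally real variant $N_k^+(x)$ is obtained by restricting to subspaces $V$ consisting of positive squarefree integers (i.e.\ $-1\notin V$): exactly one of the $2^k$ sign patterns on any presentation yields a totally real field, which together with the corresponding modification of the $2$-adic bookkeeping accounts for the leading factor $1/2^k$. The explicit form of $C_k$ is then read off from the residue at $s=1$: the Laurent expansion of $\zeta(s)^{2^k-1}$ contributes $(2^k-1)!^{-1}$, the overcount of ordered bases per field yields $\prod_{j=1}^{k-1}(2^k-2^j)^{-1}$, the finite piece $G_\tau(1)$ contains $\prod_{p\ne 2}(1+(2^k-1)/p)(1-1/p)^{2^k-1}$, the chain rule applied to $\log X_\tau = 2^{-(k-1)}\log x + O(1)$ at the top degree produces the scaling $(1/2^k)^{2^k-2}$, and summing $\delta_\tau^{-1/2^{k-1}}$ across the $2$-adic types supplies the arithmetic factor $(4^k+5\cdot 2^k+10)/32$.

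The principal obstacle is the combinatorial bookkeeping: constructing a workable normal presentation, enumerating the $2$-adic types together with their local contributions $\delta_\tau$, and verifying that enforcing the rank condition perturbs $H_\tau(s)$ only by a function analytic and non-vanishing in a half-plane strictly containing $\operatorname{Re}(s) > 1/2$. Once the factorization $H_\tau(s) = \zeta(s)^{2^k-1}G_\tau(s)$ is established, the analytic extraction of the main term, the power-saving error, and the identification of the constant $C_k$ are all standard.
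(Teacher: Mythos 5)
Your overall strategy is essentially the paper's: stratify by the $2$-adic type of $(a_1,\ldots,a_k)$ (the four cases of $r$ in Schmal's formula), count presentations with fixed odd squarefree radical to get a counting function whose Dirichlet series is $\zeta(s)^{2^k-1}$ times a factor holomorphic for $\re s>\frac{1}{2}$, and finish with Perron and a contour shift. Your combinatorial packaging differs in a harmless way: instead of the paper's explicit unique normal presentation (counted by nested sums over the index tuples $i_1<\dots<i_k$), you count all full-rank $k\times\omega(A)$ matrices over $\mathbb{F}_2$ with nonzero columns and divide by the number of ordered bases; both give the local factor $1+(2^k-1)/p^s$ and the same constant.

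Two steps, however, do not work as written. First, the power-saving error $O(X^d)$ with $d<1$ does not follow from ``the classical zero-free region'': zeros of $\zeta$ are irrelevant here, since $\zeta$ enters to a positive integer power and no logarithms or roots are taken, and the zero-free region by itself only yields an error of size $X\exp(-c\sqrt{\log X})$, which is not a power saving. What is actually needed is to shift the contour to a line $\re s=\delta$ with $\frac{1}{2}<\delta<1$ and bound the integral there, which requires a mean-value estimate of the form $\int_1^T|\zeta(\delta+it)|^{2^k-1}\,dt\ll T$ (the paper quotes this from Titchmarsh) together with a convexity bound on the horizontal segments. Second, your reduction for $N_k^+(x)$ conflates ``totally real'' with ``$-1\notin V$'': the latter only says $i\notin K$, whereas total reality requires every nontrivial element of $V$ to be positive (e.g.\ $V=\langle -2,-3\rangle$ satisfies $-1\notin V$ but is not totally real). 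The factor $1/2^k$ does come from the sign count exactly as you describe, but only for the $i$-free fields; the fields with $-1\in V$ must be treated separately (the paper puts them in bijection with totally real fields of rank $k-1$, so they only affect lower-order terms), and as stated your dichotomy would misclassify them. Relatedly, establishing that $P_k^+$ is a genuine polynomial of degree $2^k-2$ forces you to handle the secondary Dirichlet series $\sum\mu^2(n)M^{\omega_1(n)}N^{\omega_3(n)}n^{-s}=H^+(s)L(s,\chi)^{(M-N)/2}\zeta(s)^{(M+N)/2}$, which your sketch omits, though these do not change the leading constant.
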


\section{Counting number fields}
In the following we consider number fields $K=\mathbb{Q}(\sqrt{a_1},\ldots,\sqrt{a_k})$ for $k\geq 2$ where $[K:\mathbb{Q}]=2^k$ and $a_1,\ldots,a_k$ are square-free integers. We refer to such a number field simply as a multi-quadratic number field and call the tuple $(a_1,\ldots,a_n)$ its presentation. Without loss of generality we may assume that
\begin{equation}
\begin{split}
(a_1,a_2) &\equiv (1,1),(2,1),(3,1) \text{ or } (2,3) \mod 4\\
a_i &\equiv 1 \mod 4 \qquad\text{ for } i\geq 3
\end{split} \label{eq:mod4}
\end{equation}
holds (see Schmal \cite{schmal89}). Furthermore, Schmal \cite{schmal89} provides a formula for the discriminant $D(K)$ of such a number field.

\begin{theorem}\label{theorem:discriminant}
Let $K=\mathbb{Q}(\sqrt{a_1},\ldots,\sqrt{a_k})$ be a number field satisfying \eqref{eq:mod4}. Then
\begin{equation*}
D(K)=(2^r \rad(a_1\ldots a_k))^{2^{k-1}}
\end{equation*}
where
\begin{equation*}
r=
\begin{cases}
0 &\text{ for } (a_1,a_2)\equiv (1,1) \mod 4 \\
2 &\text{ for } (a_1,a_2)\equiv (2,1) \text{ or } (3,1) \mod 4 \\
3 &\text{ for } (a_1,a_2)\equiv (2,3) \mod 4.
\end{cases}
\end{equation*}
\end{theorem}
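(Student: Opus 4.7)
The natural approach is to apply the conductor--discriminant formula to the abelian extension $K/\mathbb{Q}$ with Galois group $(\mathbb{Z}/2)^k$, which gives
\begin{equation*}
D(K) = \prod_{\chi} \mathfrak{f}(\chi),
\end{equation*}
where $\chi$ runs over the $2^k$ characters of the Galois group. The trivial character has conductor $1$, while the $2^k-1$ non-trivial characters are in bijection with the non-empty subsets $S\subseteq\{1,\ldots,k\}$ via the quadratic subfield $\mathbb{Q}(\sqrt{\tilde d_S})$, where $\tilde d_S$ denotes the square-free part of $d_S:=\prod_{i\in S}a_i$. The conductor of the character attached to $S$ is then the absolute value of the discriminant of $\mathbb{Q}(\sqrt{\tilde d_S})$, namely $|\tilde d_S|$ if $\tilde d_S\equiv 1\pmod 4$ and $4|\tilde d_S|$ otherwise.

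First I would factor $\prod_{S\neq\emptyset}|\tilde d_S|$ prime by prime. For a prime $p\mid\rad(a_1\cdots a_k)$, set $A_p:=\{i:p\mid a_i\}$; since the $a_i$ are square-free, $v_p(\tilde d_S)\in\{0,1\}$ equals the parity of $|S\cap A_p|$. A standard parity count shows that exactly $2^{k-1}$ non-empty subsets satisfy $|S\cap A_p|$ odd, so
\begin{equation*}
\prod_{S\neq\emptyset}|\tilde d_S| = \rad(a_1\cdots a_k)^{2^{k-1}},
\end{equation*}
and therefore $D(K)=4^{n_4}\cdot\rad(a_1\cdots a_k)^{2^{k-1}}$, where $n_4$ counts the non-empty subsets $S$ with $\tilde d_S\not\equiv 1\pmod 4$. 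Matching this against the claimed formula $(2^r\rad(a_1\cdots a_k))^{2^{k-1}}$ and cancelling $\rad^{2^{k-1}}$ reduces the problem to verifying $n_4 = r\cdot 2^{k-2}$.

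It remains to evaluate $n_4$ in each of the four cases of \eqref{eq:mod4}. The key observation is that in every case at most one $a_i$ is even (namely $a_1$, in Cases 2 and 4), hence $v_2(d_S)\leq 1$; consequently $\tilde d_S\equiv d_S\pmod 4$ whenever $d_S$ is odd, while $\tilde d_S\equiv 2\pmod 4$ whenever $d_S$ is even. A direct enumeration then yields $n_4=0$ in Case 1, $n_4=2^{k-1}$ in Cases 2 and 3, and $n_4=3\cdot 2^{k-2}$ in Case 4 (combining the $2^{k-1}$ subsets with $1\in S$ and the $2^{k-2}$ subsets with $1\notin S$, $2\in S$). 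In all four cases this agrees with $r\cdot 2^{k-2}$ for the value of $r$ stated in the theorem. The main obstacle is precisely this bookkeeping at $p=2$; the odd-prime analysis is uniform and the parity count is painless once the conductor--discriminant formula is in hand.
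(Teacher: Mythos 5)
Your argument is correct, and it is genuinely different from what the paper does: the paper gives no proof of this statement at all, but quotes it from Schmal \cite{schmal89}, whose derivation proceeds by constructing explicit $\mathbb{Z}$-integral bases for multi-quadratic fields and reading off the discriminant from them. Your route via the conductor--discriminant formula is cleaner and more conceptual: identifying the $2^k-1$ nontrivial characters with the quadratic subfields $\mathbb{Q}(\sqrt{\tilde d_S})$, the odd-prime count (each $p\mid\rad(a_1\cdots a_k)$ has $A_p\neq\emptyset$, so exactly $2^{k-1}$ subsets $S$ meet $A_p$ in an odd number of elements, all of them non-empty) and the $2$-adic bookkeeping are both right, and your case check $n_4=r\cdot 2^{k-2}$ matches the stated values of $r$ in all four congruence classes. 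Two small points you should make explicit if you write this up: the conductor--discriminant formula computes $|D(K)|$, so you should observe that $D(K)>0$ here (the number of complex places is $0$ or $2^{k-1}$, hence even for $k\geq 2$); and the step $\tilde d_S\equiv d_S\pmod 4$ for odd $d_S$ uses that the square part of $d_S$ is an odd square, hence $\equiv 1\pmod 4$ (indeed $\bmod\ 8$). What Schmal's approach buys in exchange for the extra work is the integral bases themselves and relative versions of the statement, which your argument does not produce; for the purposes of this paper, where only the discriminant formula is needed, your proof is the more economical one.
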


Since the discriminant mainly depends on $a_1\ldots a_k$, we will determine the number of number fields when this product is fixed. More precisely, we will count their representations. In order to avoid counting different presentations of the same number field, we define a unique \emph{normal} presentation.

\begin{definition}
Let $\sqf(n)$ and $\rad(n)$ be the square-free part and the radical of $n\in\mathbb{Z}$, respectively, i.e. if $n=\pm p_1^{e_1}\cdots p_n^{e_n}$, then $\rad(n)=p_1\cdots p_n$ and $\sqf(n)=\pm p_1^{f_1}\cdots p_n^{f_n}$ where $f_i\in\{0,1\}$ and $f_i\equiv e_i$ (mod 2) for $i=1,\ldots,n$.
\end{definition}

\begin{definition}
We call a multi-quadratic number field $i$-free if no set $S\subseteq \{1,\ldots,k\}$ exists such that $\sqf(\prod_{j\in S} a_j)=-1$. In other words, $K$ is $i$-free if and only if $i\not\in K$.
\end{definition}

\begin{definition}\label{def:normal}
Let $K=\mathbb{Q}(\sqrt{a_1},\ldots,\sqrt{a_k})$ be an $i$-free multi-quadratic number field. Furthermore, let $p_1,\ldots,p_n$ be primes such that $\rad(a_1\ldots a_k) = p_1\ldots p_n$.\\
For $j=1,\ldots,k$ let $i_j:=\min\{i\in\{1,\ldots,n\}:p_i\mid a_j\}$. We say the presentation $(a_1,\ldots,a_n)$ is \emph{normal} if
\begin{equation}
i_1<i_2<\ldots<i_k \text{ and } i_1=1 \tag{a}\label{eq:normal_a}
\end{equation}
as well as
\begin{equation}
p_{i_j}\nmid a_i \quad\forall i\neq j \quad\forall j\in\{1,\ldots,k\}.\tag{b}\label{eq:normal_b}
\end{equation}
\end{definition}

In the following $i_1,\ldots,i_k$ and $p_1,\ldots,p_n$ will always be used as in the definition above. Furthermore let $P=p_1\ldots p_n$.

\begin{lemma}
Every $i$-free multi-quadratic number field has a normal presentation.
\end{lemma}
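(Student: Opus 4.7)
The plan is to recast the problem as Gaussian elimination over $\mathbb{F}_2$. Associate to each square-free $a_j$ the vector $v_j \in \mathbb{F}_2^n$ whose $i$-th coordinate is $1$ iff $p_i \mid a_j$. Since $[K:\mathbb{Q}] = 2^k$, the classes of $a_1, \ldots, a_k$ span a $k$-dimensional subspace $V$ of $\mathbb{Q}^{\times}/(\mathbb{Q}^{\times})^2$; the $i$-freeness hypothesis says $-1 \notin V$, so the projection from $V$ to the prime part $\bigoplus_p \mathbb{F}_2$ is injective. Hence $v_1, \ldots, v_k$ are $\mathbb{F}_2$-linearly independent in $\mathbb{F}_2^n$.

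The second ingredient is to identify two operations on a presentation that leave $K$ unchanged and correspond to elementary $\mathbb{F}_2$-row operations: permuting the $a_j$'s, and replacing some $a_j$ by $\sqf(a_j a_l)$ with $l \neq j$. Each clearly leaves $V$ invariant (and hence $K$), and a direct check of the definitions shows that a prime divides the new product $a_1' \cdots a_k'$ exactly when it divides the old product: if $p$ divides both $a_j$ and $a_l$ it is still captured by $a_l$, and otherwise it survives in either $\sqf(a_j a_l)$ or the unchanged factors. Thus $\rad(a_1' \cdots a_k') = p_1 \cdots p_n$ is preserved as well.

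I would then apply reduced row echelon form to the matrix with rows $v_1, \ldots, v_k$, reordering the $a_j$'s as needed to match the echelon order. The echelon property yields strictly increasing pivot columns $i_1 < i_2 < \ldots < i_k$, where $i_j = \min\{i : p_i \mid a_j\}$ since the pivot is the leading $1$; the reduced property yields \eqref{eq:normal_b}, since each pivot column contains a single nonzero entry. It remains to show $i_1 = 1$: because the radical is preserved, $p_1$ divides some $a_j'$, so column $1$ is not identically zero; but in any reduced row echelon matrix every column strictly left of the first pivot is zero (below the first row the leading $1$'s lie further right, and the first row itself has zeros to the left of its pivot). Hence the first pivot must sit in column $1$, giving $i_1 = 1$. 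The only mild subtlety is the bookkeeping verification that the two elementary moves preserve both the field $K$ and the radical of the product; this is routine from the definitions of $\sqf$ and $\rad$.
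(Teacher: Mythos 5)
Your proof is correct and is essentially the paper's argument in linear-algebra dress: the paper's induction (permute so that the smallest remaining prime divides $a_{m+1}$, then multiply $a_{m+1}$ onto every other entry it divides) is precisely Gaussian elimination to reduced row echelon form over $\mathbb{F}_2$, with conditions \eqref{eq:normal_a} and \eqref{eq:normal_b} being the echelon and reduced properties. Your explicit derivation of the $\mathbb{F}_2$-independence of $v_1,\ldots,v_k$ from $[K:\mathbb{Q}]=2^k$ and $i$-freeness is a clean way to guarantee what the paper only asserts, namely that no entry ever degenerates to $\pm 1$.
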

\begin{proof}
Let $K=\mathbb{Q}(\sqrt{a_1},\ldots,\sqrt{a_k})$. We will transform the presentation to be normal by permuting $a_1,\ldots,a_k$ and using the operation
\begin{equation*}
\mathbb{Q}(\sqrt{a_i},\sqrt{a_j}) = \mathbb{Q}\left(\sqrt{a_i},\sqrt{\sqf(a_i a_j)}\right)
\end{equation*}
which we call multiplying $a_i$ onto $a_j$. The $i$-freeness of $K$ and $[K:\mathbb{Q}] = 2^k$ guarantee that all $a_i$ will be unequal to $\pm 1$ and hence have at least one prime divisor at any time during our transformation.\\
First permute the $a_i$'s such that $p_1\mid a_1$. Then multiply $a_1$ onto all other $a_i$ which are also divisible by $p_1$. Now $i_1=1$ and condition \eqref{eq:normal_b} holds for $j=1$. Also $i_1<\min(i_2,\ldots,i_k)$.\\
By induction we will show that for $m=1,\ldots, k$ we can transform the presentation such that condition \eqref{eq:normal_b} holds for all $j\leq m$ and $1=i_1<\ldots <i_m$ as well as $i_m<\min(i_{m+1},\ldots,i_k)$. For $m=1$ we did so above.\\
Let these hypotheses be satisfied for an $m<k$. Then for
\begin{equation*}
j_{m+1} := \min\{i : p_i\mid a_j, j > m\}
\end{equation*}
we know $j_{m+1}>i_m$. We permute the $a_{m+1},\ldots,a_k$ such that $p_{j_{m+1}}\mid a_{m+1}$. Hence $i_{m+1}=j_{m+1}$. After multiplying the new $a_{m+1}$ onto all other $a_i$ divisible by $p_{i_{m+1}}$ condition \eqref{eq:normal_b} is satisfied for $j=m+1$ and $i_{m+1}<\min(i_{m+2},\ldots,i_k)$. Hence the hypotheses now hold for $j=m+1$.\\
This result for $j=k$ proves the lemma.
\end{proof}

\begin{remark}
After transforming it to be normal, a presentation not necessarily satisfies condition \eqref{eq:mod4} any more.
\end{remark}

\begin{lemma}
Every $i$-free multi-quadratic number field has at most one normal presentation.
\end{lemma}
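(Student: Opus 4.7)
The plan is to identify the normal presentation of $K$ with the reduced row echelon form of a $K$-intrinsic $\mathbb{F}_2$-vector space, and then invoke the uniqueness of RREF from linear algebra.

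First, I would observe that the set $H_K = \{a\in\mathbb{Q}^*/(\mathbb{Q}^*)^2 : \sqrt{a}\in K\}$ depends only on $K$ and is naturally an $\mathbb{F}_2$-vector space. For any presentation $(a_1,\ldots,a_k)$ of $K$, the classes $\sqf(a_1),\ldots,\sqf(a_k)$ lie in $H_K$ and generate it; moreover, the hypothesis $[K:\mathbb{Q}]=2^k$ forces them to be $\mathbb{F}_2$-linearly independent, so they form a basis of $H_K$. Because $K$ is $i$-free, $-1\notin H_K$, so every element of $H_K$ is represented by a positive squarefree integer. If we fix the ordering $p_1<p_2<\ldots<p_n$ of the primes appearing in elements of $H_K$ -- a set that for any presentation equals $\{p:p\mid \rad(a_1\cdots a_k)\}$ and is therefore $K$-intrinsic -- then $H_K$ embeds as a $k$-dimensional subspace $V\subseteq \mathbb{F}_2^n$ via the map sending $a$ to its vector of prime exponents modulo $2$.

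Next, I would reformulate the normality conditions in these coordinates. The presentation $(a_1,\ldots,a_k)$ corresponds to a $k\times n$ matrix $M$ over $\mathbb{F}_2$ whose rows span $V$. By definition $i_j$ is the column of the leading $1$ in the $j$-th row of $M$. Condition \eqref{eq:normal_a} asserts that these leading columns are strictly increasing and that the first is column $1$; the latter holds automatically because $p_1\mid\rad(a_1\cdots a_k)$ makes the first column of $M$ nonzero and the strict monotonicity then pins the pivot there. Condition \eqref{eq:normal_b} asserts that each pivot column has $0$'s outside its own pivot row. Together, these are exactly the defining conditions of the reduced row echelon form of $M$.

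Since the RREF of a subspace $V\subseteq\mathbb{F}_2^n$ is unique, the matrix $M$, and hence the ordered tuple $(a_1,\ldots,a_k)$ (up to the harmless identification of squarefree integers with their exponent vectors), is uniquely determined by $V$ and therefore by $K$. The only real obstacle is bookkeeping: one must check that all auxiliary data needed to set up $V$ -- the prime set $\{p_1,\ldots,p_n\}$, its ordering, and the group $H_K$ itself -- depends only on $K$ and not on the chosen presentation. This reduces to the observation already made, that any presentation generates the same subgroup $H_K$ and hence involves the same set of primes.
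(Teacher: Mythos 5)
Your proof is correct, and it is in essence the paper's argument translated into explicit $\mathbb{F}_2$-linear algebra. The paper proceeds by hand: given two normal presentations, it uses Besicovitch's theorem on the $\mathbb{Q}$-linear independence of square roots of distinct square-free integers to write each $a_t'$ as $\sqf\bigl(\prod_{j\in S_t}a_j\bigr)$ for a nonempty $S_t\subseteq\{1,\ldots,k\}$, deduces from condition (a) that the two pivot sequences $(i_1,\ldots,i_k)$ and $(i_1',\ldots,i_k')$ coincide, and then from condition (b) that $S_t=\{t\}$ --- which is exactly the standard proof of uniqueness of the reduced row echelon form, specialized to this situation (the sets $S_t$ encode the change of basis between the two row-spanning sets). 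Your version gains by naming the structure: it lets you quote RREF uniqueness as a black box, it explains why conditions (a) and (b) are the natural ones, and it makes the existence lemma transparent as well (the paper's transformation procedure is Gaussian elimination). The one substantive input you state without justification is that the row space $V$ --- equivalently, the subgroup of $\mathbb{Q}^*/(\mathbb{Q}^*)^2$ generated by $a_1,\ldots,a_k$ --- depends only on $K$ and not on the presentation. That the $a_i$ generate all of $H_K$ amounts to $|H_K|\leq 2^k$, which is precisely where Besicovitch's theorem (or Kummer theory) enters; the paper makes this step explicit, and your write-up should too. Granting that, the remaining bookkeeping (fixed ordering of the intrinsic prime set, positivity of representatives via $i$-freeness, nonvanishing of rows via $[K:\mathbb{Q}]=2^k$) is handled correctly.
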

\begin{proof}
Let $(a_1,\ldots,a_n)$ and $(a_1',\ldots,a'_k)$ be two normal presentations of an $i$-free multi-quadratic number field $K$ and let $i_1,\ldots,i_k$ and $i_1',\ldots,i_k'$ be defined as in Definition \ref{def:normal}. Then $\sqrt{a_i'}\in K$ for all $i=1,\ldots ,k$ while
\begin{equation*}
B=\left\{\sqrt{\sqf\left( \textstyle \prod_{j\in S} a_j\right) }\,\middle|\, S\subseteq\{1,\ldots,k\}\right\}
\end{equation*}
is a generating set of $K$ seen as a $\mathbb{Q}$ vector space. Since the set $\{\sqrt{d}\mid d\in \mathbb{Z}, \text{square-free}\}$ is linear independent over $\mathbb{Q}$ (see Besicovitch \cite{besicovitch40}), a non-empty set $S_i\subseteq\{1,\ldots,k\}$ exists for every $i=1,\ldots,k$ such that $a_i'=\sqf(\prod_{j\in S_i} a_j)$.\\
For a $t\in\{1,\ldots,k\}$ let $m=\min(S_t)$. Let $i_1',\ldots,i_k'$ be defined for $a_1',\ldots,a_k'$  Then $i_m=i_t'$.
It is easy to see that $i_t'=\min_{m\in S_t} i_m$ and hence $i_t'\in\{i_1,\ldots,i_k\}$ holds for every $t=1,\ldots,k$. Since condition \eqref{eq:normal_a} holds for both sets of indices, we conclude $(i_1',\ldots,i_k')=(i_1,\ldots,i_k)$.\\
Let $l\in S_t$. Then $p_{i_l}\mid a_t'$ while according to condition \eqref{eq:normal_b} $a_l'$ is the only $a_i'$ divisible by $p_{i_l'}=p_{i_l}$. This means $l=t$ from which we conclude $S_t=\{t\}$ and $(a_1',\ldots,a_k')=(a_1,\ldots,a_k)$.
\end{proof}

For a square-free $P=p_1\ldots p_n$ let $R_k(P)$ and $Q_k(P)$ be the number of totally real and general multi-quadratic number fields respectively with $\rad(a_1\ldots a_n)=P$.\\
Since totally real number fields are $i$-free, we can use the lemmas above and count their normal forms in order to calculate $R_k(P)$. That means we are looking for all tuples $(a_1,\ldots, a_k)\in \mathbb{N}^k$ such that all conditions in Definition \ref{def:normal} are met. Knowing which of the primes $p_1,\ldots,p_n$ they are divisible by clearly determines $a_1,\ldots,a_k$ because they are square-free and positive. Therefore we must count the number of possible ways $p_1,\ldots,p_n$ can divide $a_1,\ldots,a_k$.\\
For every possible tuple $(i_1,\ldots,i_k)$, i.e every one with $1=i_1<i_2<\ldots<i_k$, we separately count the number of possibilities and then sum up.\\
For fixed $i_1,\ldots,i_k$ the choices which of the number $a_1,\ldots,a_k$ each prime $p_i$ divides are independent of each other. Hence we obtain the overall number of possibilities as the product of the possibilities for each $p_i$. Due to condition \eqref{eq:normal_b} the prime $p_{i_j}$ only divides $a_j$ for every $j=1,\ldots,k$. For a prime $p_i$ with $i_j<i<i_{j+1}$, $j=1,\ldots,k-1$ condition \eqref{eq:normal_a} states that $p_i\nmid a_l$ for all $l>j$. Hence $p_i$ can divide a non-empty subset of $a_1,\ldots, a_j$. That means for all $i_{j+1}-i_j-1$ of such $p_i$ there are $2^j-1$ possibilities. For the $n-i_k$ primes $p_i$ with $i>i_k$ there are $2^k-1$ possibilities since these may divide any of the number $a_1,\ldots,a_k$. Therefore
\begin{equation*}
R_k(P) = \sum_{i_2=i_1+1}^{n} \sum_{i_3=i_2+1}^{n} \ldots \sum_{i_k=i_{k-1}+1}^{n} 1^{i_2-i_1-1} 3^{i_3-i_2-1} \ldots (2^k-1)^{n-i_k}.
\end{equation*}
Using the identity
\begin{equation}\label{eq:ab}
\sum_{j=i+1}^n a^{j-i-1} b^{n-j} = \frac{1}{b-a} (b^{n-i}-a^{n-i})
\end{equation}
successively on all sums beginning with the inner one we can simplify the equation above.
To lighten the notation, we will write $\sum^*_j a_j := \sum_j c_j a_j$ for a finite sequence $a_j$, where $c_j$ are certain absolutely bounded positive real numbers (independent of P) that may change at each occurrence. Their exact values can easily be determined in every
instance, but play no role in the forthcoming discussion.
Furthermore let $\omega(P)$ be the number of distinct prime factors of $P$ and we obtain
\begin{lemma}\label{lem:R_k}
Let $P$ be square-free and $k\geq 1$. Then
\begin{equation*}
R_k(P) = F_k (2^k-1)^{\omega(P)-1}+\ssum_{j=1}^{k-1} (2^j-1)^{\omega(P)-1},
\end{equation*}
where
\begin{equation*}
F_k = \prod_{j=1}^{k-1}\frac{1}{2^k-2^j}.
\end{equation*}
\end{lemma}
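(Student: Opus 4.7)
The plan is to evaluate the nested sum in the definition of $R_k(P)$ by successive applications of identity \eqref{eq:ab}, starting from the innermost summation. Writing $n := \omega(P)$, I first apply \eqref{eq:ab} with $a = 2^{k-1}-1$ and $b = 2^k-1$ to the $i_k$-sum, obtaining
$$\sum_{i_k=i_{k-1}+1}^n(2^{k-1}-1)^{i_k-i_{k-1}-1}(2^k-1)^{n-i_k}=\frac{(2^k-1)^{n-i_{k-1}}-(2^{k-1}-1)^{n-i_{k-1}}}{2^{k-1}},$$
i.e.\ two terms of the form $c\cdot r^{n-i_{k-1}}$ with $r\in\{2^{k-1}-1,\ 2^k-1\}$. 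The same collapse applies recursively: a typical summand $c\cdot r^{n-i_j}$ inside the $i_j$-sum with outer factor $(2^{j-1}-1)^{i_j-i_{j-1}-1}$ becomes, via \eqref{eq:ab},
$$\frac{c}{r-(2^{j-1}-1)}\bigl(r^{n-i_{j-1}}-(2^{j-1}-1)^{n-i_{j-1}}\bigr),$$
again two terms, with bases $r$ and $2^{j-1}-1$. Iterating through $j=k-1,k-2,\ldots,2$ and finally setting $i_1=1$ expresses $R_k(P)$ as a finite $\mathbb{Q}$-linear combination of powers $(2^l-1)^{n-1}$ for $l\in\{1,\ldots,k\}$.

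To identify the coefficient of $(2^k-1)^{n-1}$, note that the base $2^k-1$ survives the cascade only along the single branch that keeps the first summand at every split; any choice of the second branch replaces the base by some smaller $2^{j-1}-1$, from which the value $2^k-1$ cannot be recovered. Tracking the denominators along this surviving branch gives
$$\prod_{j=1}^{k-1}\frac{1}{(2^k-1)-(2^j-1)}=\prod_{j=1}^{k-1}\frac{1}{2^k-2^j}=F_k,$$
matching the leading coefficient in the lemma. Every other branch contributes a term of the shape $\gamma\cdot(2^l-1)^{n-1}$ with $l<k$, where $\gamma$ is a product of $k-1$ rationals of the form $(2^a-2^b)^{-1}$; collecting such terms by $l$ yields the stated form $F_k(2^k-1)^{n-1}+\ssum_{j=1}^{k-1}(2^j-1)^{n-1}$.

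The main obstacle is just the bookkeeping. The possible bases are exactly $\{2^l-1:l=1,\ldots,k\}$ because every iteration only introduces a new base of the form $2^{j-1}-1$ with $j\in\{2,\ldots,k\}$; and the aggregated coefficients are bounded uniformly in $n$, since each of the at most $2^{k-1}$ branches contributes a single rational depending only on $k$. No identity beyond repeated use of \eqref{eq:ab} is required.
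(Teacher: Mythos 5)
Your proposal is correct and follows exactly the paper's (very terse) argument: the paper likewise evaluates the nested sum by applying \eqref{eq:ab} successively from the innermost summation outward and absorbs the lower-order terms with bases $2^j-1$, $j<k$, into the $\ssum$ notation, with the leading coefficient $F_k$ arising from the single branch that retains the base $2^k-1$. Your write-up just makes explicit the bookkeeping the paper leaves to the reader.
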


Using this we calculate $Q_K(P)$. A general multi-quadratic number field may or may not be $i$-free. If it is not $i$-free, we can transform its presentation such that $a_1=-1$ and then multiply $a_1$ on all negatives among $a_2,\ldots, a_k$. This gives us a bijection to totally real multi-quadratic number fields $\mathbb{Q}(\sqrt{a_2},\ldots,\sqrt{a_k})$ with $\rad(a_2\ldots a_k)=P$. We have already calculated the number $R_{k-1}(P)$ of such number fields.\\
For the $i$-free number fields we proceed analogically to totally real number fields and count normal presentations. However, each of the numbers $a_1,\ldots,a_k$ may now be positive or negative. Hence there are $2^k R_k(P)$ possibilities. We conclude
\begin{equation*}
Q_k(P) = 2^k R_k(P) + R_{k-1}(P).
\end{equation*}
which together with Lemma \ref{lem:R_k} proves
\begin{lemma}\label{lem:Q_k}
Let $P$ be square-free and $k\geq 2$. Then
\begin{equation*}
Q_k(P) = 2^k F_k (2^k-1)^{\omega(P)-1}+\ssum_{j=1}^{k-1} (2^j-1)^{\omega(P)-1}.
\end{equation*}
\end{lemma}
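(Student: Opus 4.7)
The plan is to combine the identity $Q_k(P) = 2^k R_k(P) + R_{k-1}(P)$ derived in the paragraph just above with Lemma~\ref{lem:R_k}, and then fold the lower-order contributions into a single $\ssum$.

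First I would justify the identity by partitioning the fields counted by $Q_k(P)$ according to $i$-freeness. For a non-$i$-free field $K$ there exists $S\subseteq\{1,\ldots,k\}$ with $\sqf(\prod_{j\in S}a_j)=-1$; after a permutation and the multiplication operation used in the existence proof for the normal presentation, I can force $a_1=-1$ and then clear the sign from every remaining entry by multiplying $a_1$ onto each negative $a_j$. Because $\rad(a_2\cdots a_k)=P$, this defines a bijection between the non-$i$-free fields counted by $Q_k(P)$ and the totally real $(k-1)$-quadratic fields with radical $P$, contributing $R_{k-1}(P)$. For the $i$-free fields I would rerun the normal-form enumeration that produced $R_k(P)$, the only modification being that each of the $k$ entries in the normal presentation may now independently be positive or negative; this multiplies the count by $2^k$.

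Next I would substitute Lemma~\ref{lem:R_k} into both summands, obtaining
\begin{equation*}
Q_k(P) = 2^k F_k(2^k-1)^{\omega(P)-1} + 2^k\ssum_{j=1}^{k-1}(2^j-1)^{\omega(P)-1} + F_{k-1}(2^{k-1}-1)^{\omega(P)-1} + \ssum_{j=1}^{k-2}(2^j-1)^{\omega(P)-1}.
\end{equation*}
Scaling a $\ssum$ by the positive constant $2^k$ preserves its form, the isolated term $F_{k-1}(2^{k-1}-1)^{\omega(P)-1}$ fits the pattern at index $j=k-1$, and $\ssum_{j=1}^{k-2}$ is already contained in $\ssum_{j=1}^{k-1}$, so all three lower-order contributions collapse into a single $\ssum_{j=1}^{k-1}(2^j-1)^{\omega(P)-1}$. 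The surviving leading term is $2^k F_k(2^k-1)^{\omega(P)-1}$, matching the claim.

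The arithmetic in the second step is pure bookkeeping under the $\ssum$ convention, so the only genuinely delicate point is the bijection in the non-$i$-free case: I would need to verify that the normalisation $a_1=-1$ with all remaining $a_j$ positive is always achievable (using $i\in K$) and uniquely determined by $K$ together with the fixed radical $P$. Once that is settled, Lemma~\ref{lem:R_k} plugs in mechanically.
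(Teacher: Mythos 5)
Your proposal is correct and follows essentially the same route as the paper: split $Q_k(P)$ by $i$-freeness, map the non-$i$-free fields bijectively onto the totally real $(k-1)$-quadratic fields with radical $P$ (giving $R_{k-1}(P)$), count the $i$-free ones as $2^k R_k(P)$ via normal presentations with free signs, and absorb all lower-order terms into the $\ssum$ notation. The bookkeeping in your final display matches what the paper leaves implicit.
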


Since the discriminant also depends on $r$ in Theorem \ref{theorem:discriminant}, we will now distinguish the number fields by the remainders of $a_1$ and $a_2$ modulo 4. We call a number field with $(a_1,a_2)\equiv (1,1)$ (mod 4) when its presentation satisfies \eqref{eq:mod4} a $(1,1)$-field. Let $Q_k^{(1,1)}(P)$ be the number of such fields where $\rad(a_1\ldots a_k)=P$. Analogously, we name the number of fields for the other cases in \eqref{eq:mod4} and do the same for totally real number fields.\\
If the radical is odd so are $a_1,\ldots,a_k$ and the number field is a $(1,1)$- or $(3,1)$-field. Similarly for an even radical one of $a_1,\ldots,a_k$ must be even and the number field is a $(2,1)$- or $(2,3)$-field. Hence for an odd, square-free $P$ we obtain
\begin{equation} \label{eq:sum_mod4}
\begin{split}
Q_k^{(1,1)}(P) + Q_k^{(3,1)}(P) &= Q_k(P)\\
Q_k^{(2,1)}(2P) + Q_k^{(2,3)}(2P) &= Q_k(2P).
\end{split}
\end{equation}

Since $(1,1)$-fields are $i$-free, we can again count normal presentations. After transforming a $(1,1)$-field to have a normal presentation, $a_i\equiv 1$ (mod 4) still holds for all $i=1,\ldots,k$. Similarly to totally real number fields $a_1,\ldots,a_k$ are clearly determined by their divisibilities by $p_1,\ldots,p_n$ since their sign is to be chosen such that $a_i\equiv 1$ (mod 4). That means $Q_k^{(1,1)}(P)=R_k(P)$.\\
As $(2,1)$-fields are also $i$-free we can proceed analogically. Transforming them to have a normal presentation using $p_1=2$ leads to $a_1\equiv 2$ and $a_i\equiv 1$ (mod 4)for $i=2,\ldots,n$. That means the sign is only determined for $a_2,\ldots,a_k$ while there are two possibilities for $a_1$. Hence $Q_k^{(2,1)}(2P)=2R_k(2P)$.\\
Together with \eqref{eq:sum_mod4} we have thereby proved
\begin{theorem} \label{theorem:Q_k_mod4}
Let $P$ be odd and square-free and $k\geq 2$. Then
\begin{align*}
Q_k^{(1,1)}(P) &= F_k (2^k-1)^{\omega(P)-1}+E_1\\
Q_k^{(3,1)}(P) &= (2^k-1) F_k (2^k-1)^{\omega(P)-1}+E_2\\
Q_k^{(2,1)}(2P) &= 2F_k (2^k-1)^{\omega(P)}+E_3\\
Q_k^{(2,3)}(2P) &= (2^k-2) F_k (2^k-1)^{\omega(P)}+E_4,
\end{align*}
where
\begin{equation*}
E_i = \ssum_{j=1}^{k-1} (2^j-1)^{\omega(P)-1}
\end{equation*}
for $1\leq i\leq 4$.
\end{theorem}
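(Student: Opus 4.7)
The plan is to derive all four formulas algebraically from two inputs that are already prepared in the text preceding the statement: the identities $Q_k^{(1,1)}(P) = R_k(P)$ and $Q_k^{(2,1)}(2P) = 2R_k(2P)$, obtained from counting normal presentations under the $\bmod\,4$ sign constraints, together with the decomposition \eqref{eq:sum_mod4} and the explicit asymptotics from Lemmas \ref{lem:R_k} and \ref{lem:Q_k}. No new combinatorial input is needed; the remaining work is purely arithmetic.

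First I would substitute Lemma \ref{lem:R_k} into $Q_k^{(1,1)}(P) = R_k(P)$ to obtain the first line verbatim. Then, using \eqref{eq:sum_mod4}, I would write $Q_k^{(3,1)}(P) = Q_k(P) - Q_k^{(1,1)}(P)$ and subtract Lemma \ref{lem:R_k} from Lemma \ref{lem:Q_k}: the main terms combine to $(2^k - 1)F_k(2^k-1)^{\omega(P)-1}$, while the two $\ssum$ errors merge into a single error of the same shape.

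For the even radicals the key observation is $\omega(2P) = \omega(P)+1$. Applying Lemma \ref{lem:R_k} to $Q_k^{(2,1)}(2P) = 2R_k(2P)$ yields a leading term $2F_k(2^k-1)^{\omega(P)}$ and an error of the form $\ssum_{j=1}^{k-1}(2^j-1)^{\omega(P)}$. The small point here is that this error must be recast as $\ssum_{j=1}^{k-1}(2^j-1)^{\omega(P)-1}$; this is legitimate because each factor $(2^j-1)$ is an absolutely bounded positive constant independent of $P$ and may be absorbed into the coefficients $c_j$ of the $\ssum$ convention, which are permitted to change between occurrences. Evaluating Lemma \ref{lem:Q_k} at $2P$ and subtracting the previous line via \eqref{eq:sum_mod4} then produces $Q_k^{(2,3)}(2P) = (2^k-2)F_k(2^k-1)^{\omega(P)} + E_4$ with an error of the same form.

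The main (and mild) obstacle is simply the bookkeeping in the error terms: ensuring that all constants $(2^j-1)$, the factor $2$ from the sign choice of $a_1$, the leading constant $F_k$, and the differences of the main coefficients get absorbed correctly so that what remains fits cleanly under the single umbrella $\ssum_{j=1}^{k-1}(2^j-1)^{\omega(P)-1}$. Since every such constant is independent of $P$ and positive for $j \geq 1$, the absorption is routine and no analytic estimate is required at this stage.
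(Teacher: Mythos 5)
Your proposal is correct and follows essentially the same route as the paper: the paper's (implicit) proof consists precisely of the identities $Q_k^{(1,1)}(P)=R_k(P)$ and $Q_k^{(2,1)}(2P)=2R_k(2P)$ from counting normal presentations with the mod $4$ sign constraints, followed by the subtraction via \eqref{eq:sum_mod4} and substitution of Lemmas \ref{lem:R_k} and \ref{lem:Q_k}, with $\omega(2P)=\omega(P)+1$ and bounded constants absorbed into the $\ssum$ coefficients exactly as you describe.
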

In order to get a similar result for totally real multi-quadratic number fields we use the following lemma which can easily be proved by induction.
\begin{lemma}\label{lem:poss}
For $a,b\in\mathbb{N}$ consider the number of possibilities of $a$-times selecting a non-empty subset of a $b$-element set where the parities of how often each element is to be selected overall are given. Should all elements be selected evenly often there are $((2^b-1)^a+(2^b-1) (-1)^a)/2^b$ possibilities, for all other combinations of parities there are $((2^b-1)^a - (-1)^a)/2^b$ possibilities.
\end{lemma}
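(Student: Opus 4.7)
\emph{Proof sketch.} The plan is to prove both formulas simultaneously by induction on $a$. Let $N_e(a)$ denote the claimed count when the prescribed parity vector $\pi\in\{0,1\}^b$ is identically zero (every element is to be picked an even number of times) and $N_o(a)$ the count for any fixed nonzero $\pi$; the induction hypothesis will include the assertion that $N_o(a)$ depends only on $a$ and not on which particular nonzero pattern is prescribed.

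The base case $a=1$ is immediate. If $\pi=0$, no non-empty subset of the $b$-element set has all parities even, so $N_e(1)=0$, which matches $((2^b-1)+(2^b-1)(-1))/2^b = 0$. If $\pi\neq 0$, the subset $\{j:\pi_j=1\}$ is the unique non-empty set realising the prescribed parities, so $N_o(1)=1$, which matches $((2^b-1)-(-1))/2^b = 1$.

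For the inductive step I would condition on the first chosen subset $S_1$, which must be non-empty. Given $S_1$, the remaining $a$ selections must realise the residual parity vector $\pi\oplus\chi(S_1)$, where $\chi(S_1)\in\{0,1\}^b$ is the characteristic vector of $S_1$. If $\pi=0$, then $\pi\oplus\chi(S_1)\neq 0$ for every non-empty $S_1$, so each of the $2^b-1$ choices contributes $N_o(a)$, giving
\[
N_e(a+1) = (2^b-1)\,N_o(a).
\]
If $\pi\neq 0$, the residual vanishes for exactly one (and non-empty) choice $S_1=\{j:\pi_j=1\}$, contributing $N_e(a)$; the remaining $2^b-2$ non-empty choices leave a nonzero residual and each contributes $N_o(a)$, yielding
\[
N_o(a+1) = N_e(a) + (2^b-2)\,N_o(a).
\]
Substituting the inductive expressions into these two recurrences and simplifying is a short mechanical computation that reproduces the $(a+1)$-case of both closed-form formulas. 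The only point requiring care is that the second recurrence silently uses a single value $N_o(a)$ for the various nonzero residual patterns arising as $S_1$ varies; this is exactly the pattern-independence built into the induction hypothesis, and I regard it as the only mild subtlety in an otherwise routine argument.
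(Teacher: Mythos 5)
Your proof is correct and uses exactly the route the paper indicates (it states only that the lemma "can easily be proved by induction"): your two recurrences $N_e(a+1)=(2^b-1)N_o(a)$ and $N_o(a+1)=N_e(a)+(2^b-2)N_o(a)$ do reproduce the closed forms, and you correctly identify and handle the one subtlety, namely that the count for a nonzero parity pattern is independent of the pattern. Nothing is missing.
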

When calculating $R_k^{(1,1)}(P)$ we must take into account that $a_1,\ldots,a_k$ must each be divisible by an even number of prime factors with $p_i\equiv 3$ (mod 4). Let the first $\omega_3=\omega_3(P)$ prime factors of $P$ be congruent 3 mod 4 and the remaining $\omega_1=\omega_1(P)$ be congruent 1 mod 4. Again we count the number of normal presentations for fixed $i_1,\ldots,i_k$. Here four cases can arise.\\
For $\omega_3<i_2$ exactly $\omega_3$ primes $p_i\equiv 3$ (mod 4) divide $a_1$ while none of them divide $a_2,\ldots,a_k$. Hence the number field is a $(1,1)$-field if and only if $\omega_3$ is even. Using \eqref{eq:ab} 
we calculate the number of possibilities in this case as
\begin{multline*}
\frac{1^{\omega_3}+(-1)^{\omega_3}}{2} \sum_{i_2=\omega_3+1}^{n} \sum_{i_3=i_2+1}^{n} \ldots \sum_{i_k=i_{k-1}+1}^{n} 1^{i_2-\omega_3-1} 3^{i_3-i_2-1} \ldots (2^k-1)^{n-i_k}\\
= (1^{\omega_3}+(-1)^{\omega_3}) \ssum_{j_1=1}^k (2^{j_1}-1)^{\omega_1}.
\end{multline*}
If $\omega_3=i_l$ for some $2\leq l\leq k$ then $a_l$ has exactly one prime factor $p_i\equiv 3$ (mod 4) which means $a_l\equiv 3$ (mod 4).\\
If $i_l<\omega_3<i_{l+1}$ for some $2\leq l\leq k$ then some of the primes $p_i$ with $1\leq i\leq i_l$ divide $a_1,\ldots,a_{l-1}$ and exactly one of them, namely $p_{i_l}$, divides $a_l$. That means the parity of the number of primes $p_i$ with $i_l<i\leq \omega_3$ which divide $a_j$ is fixed for every $j=1,\ldots,l$ in order to guarantee $a_j\equiv 1$ (mod 4). Since the number must be odd for $a_l$, there are $((2^l-1)^{\omega_3-i_l} - (-1)^{\omega_3-i_l})/2^l$ possibilities for the these primes according to Lemma \ref{lem:poss}. \\
When summing over all possibilities for $i_1,\ldots,i_k$ we can choose $i_1,\ldots,i_l$ and $i_{l+1},\ldots,i_k$ independently from each other because of $i_l<\omega_3<i_{l+1}$. Therefore the number of possibilities is the product of
\begin{multline*}
\sum_{i_2=i_1+1}^{\omega_3-1} \ldots \sum_{i_l=i_{l-1}+1}^{\omega_3-1} 1^{i_2-i_1-1} 3^{i_3-i_2-1}\ldots \frac{(2^l-1)^{\omega_3-i_l} - (-1)^{\omega_3-i_l}}{2^l}\\
= q_{-1}(-1)^{\omega_3-1} + \ssum_{j_3=1}^l (2^{j_3}-1)^{\omega_3-1}
\end{multline*}
with $q_{-1}\in\mathbb{Q}$ and
\begin{equation*}
\sum_{i_{l+1}=\omega_3+1}^n \ldots \sum_{i_k=i_{k-1}+1}^n (2^l-1)^{i_{l+1}-\omega_3-1} \ldots (2^k-1)^{\omega(P)-i_k} = \ssum_{j_1=l}^k (2^{j_1}-1)^{\omega_1}.
\end{equation*}
Both factors were transformed using \eqref{eq:ab}.\\
If $i_k<\omega_3$, similarly to the previous case, there are $((2^k-1)^{\omega_3-i_k} - (-1)^{\omega_3-i_k})/2^k$ possibilities for the primes $p_i$ with $i_k<i\leq \omega_3$. Therefore there are
\begin{multline*}\label{eq:k_ung_3}
\sum_{i_2=1_1+1}^{\omega_3-1} \ldots \sum_{i_k=i_{k-1}+1}^{\omega_3-1} 1^{i_2-i_1-1} \ldots \frac{(2^k-1)^{\omega_3-i_k} - (-1)^{\omega_3-i_k}}{2^k} (2^k-1)^{n-\omega_3}\\
= \frac{F_k}{2^k} (2^k-1)^{n-1} + \left( q_{-1}(-1)^{\omega_3-1} + \ssum_{j_3=1}^l (2^{j_3}-1)^{\omega_3-1}\right)  (2^k-1)^{\omega_1}
\end{multline*}
possibilities in this case. Adding up all cases gives us
\begin{multline*}
R_k^{(1,1)}(P) = \frac{F_k}{2^k} (2^k-1)^{n-1}\\
+ \ssum_{\substack{1\leq j_3\leq k-1\\ j_3\leq j_1\leq k}} (2^{j_3}-1)^{\omega_3-1}(2^{j_1}-1)^{\omega_1} + \ssum_{1\leq j_1\leq k}(-1)^{\omega_3-1}(2^{j_1}-1)^{\omega_1}.
\end{multline*} 
In a similar manner one can compute $R_k^{(2,1)}(P)$. Since \eqref{eq:sum_mod4} also holds for totally real number fields we obtain
\begin{theorem}\label{theorem:R_k_mod4}
Let $P$ be odd and square-free and $k\geq 2$. Then
\begin{align*}
R_k^{(1,1)}(P) &= \frac{1}{2^k} F_k (2^k-1)^{\omega(P)-1}+E_1\\
R_k^{(3,1)}(P) &= \frac{2^k-1}{2^k} F_k (2^k-1)^{\omega(P)-1}+E_2\\
R_k^{(2,1)}(2P) &= \frac{1}{2^{k-1}} F_k (2^k-1)^{\omega(P)}+E_3\\
R_k^{(2,3)}(2P) &= \frac{2^{k-1}-1}{2^{k-1}} F_k (2^k-1)^{\omega(P)}+E_4,
\end{align*}
where
\begin{equation*}
E_i = \ssum_{\substack{1\leq j_3\leq k-1\\ j_3\leq j_1\leq k}} (2^{j_3}-1)^{\omega_3(P)-1}(2^{j_1}-1)^{\omega_1(P)} + \ssum_{1\leq j_1\leq k}(-1)^{\omega_3(P)-1}(2^{j_1}-1)^{\omega_1(P)}
\end{equation*}
for $1\leq i\leq 4$.
\end{theorem}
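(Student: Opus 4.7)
The plan is to compute $R_k^{(1,1)}(P)$ and $R_k^{(2,1)}(2P)$ directly by enumerating normal presentations under the relevant congruence conditions, and then obtain $R_k^{(3,1)}(P)$ and $R_k^{(2,3)}(2P)$ by subtracting from $R_k(P)$ and $R_k(2P)$ respectively, using the $R_k$-analogue of \eqref{eq:sum_mod4} together with Lemma \ref{lem:R_k}.

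For $R_k^{(1,1)}(P)$ I would follow the case analysis already sketched before the theorem. Order the primes of $P$ so that the first $\omega_3 = \omega_3(P)$ are $\equiv 3 \pmod 4$ and the remaining $\omega_1 = \omega_1(P)$ are $\equiv 1 \pmod 4$. In a $(1,1)$-field in normal form the sign of each $a_j$ is forced by $a_j \equiv 1 \pmod 4$, so it suffices to count divisibility patterns in which each $a_j$ is divisible by an even number of primes $\equiv 3 \pmod 4$. Fix a tuple $1 = i_1 < \ldots < i_k$ and split into the four exhaustive cases $\omega_3 < i_2$, $\omega_3 = i_l$ for some $2 \leq l \leq k$, $i_l < \omega_3 < i_{l+1}$ for some $l$, and $\omega_3 > i_k$. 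In each case Lemma \ref{lem:poss} supplies the count of distributions of the primes $\equiv 3 \pmod 4$ producing the required parities, while the primes $\equiv 1 \pmod 4$ are distributed by straightforward products. Applying \eqref{eq:ab} successively to collapse the multi-sums, the case $\omega_3 > i_k$ yields the main term $\frac{F_k}{2^k}(2^k-1)^{n-1}$, with the factor $1/2^k$ arising from the parity correction in Lemma \ref{lem:poss}; all other cases deliver contributions lying inside the starred error of the asserted form.

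The analysis for $R_k^{(2,1)}(2P)$ is strictly parallel: $p_1 = 2$ divides only $a_1$ and $a_1 \equiv 2 \pmod 4$ is automatic, so a parity constraint on primes $\equiv 3 \pmod 4$ is imposed only on $a_2, \ldots, a_k$. This drops one parity restriction relative to the $(1,1)$ case, replacing the $1/2^k$ by $1/2^{k-1}$ in the leading coefficient, and the extra prime $2$ contributes an additional factor $(2^k-1)$, producing $\frac{1}{2^{k-1}} F_k (2^k-1)^{\omega(P)}$. Once $R_k^{(1,1)}(P)$ and $R_k^{(2,1)}(2P)$ are in hand, the formulas for $R_k^{(3,1)}(P)$ and $R_k^{(2,3)}(2P)$ follow by $R_k^{(3,1)}(P) = R_k(P) - R_k^{(1,1)}(P)$ and $R_k^{(2,3)}(2P) = R_k(2P) - R_k^{(2,1)}(2P)$, using Lemma \ref{lem:R_k} and the factorisation $(2^j-1)^{\omega(P)-1} = (2^j-1)^{\omega_3-1}(2^j-1)^{\omega_1}$ to recast the $R_k$-error in the shape of $E_i$.

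The main obstacle is the bookkeeping: every leftover piece from the four cases must be absorbed into the prescribed error shape $\ssum_{j_3 \leq j_1}(2^{j_3}-1)^{\omega_3-1}(2^{j_1}-1)^{\omega_1} + \ssum_{j_1}(-1)^{\omega_3-1}(2^{j_1}-1)^{\omega_1}$. The alternating $(-1)^{\omega_3-1}$ contributions arise from the $-(-1)^{\omega_3 - i_l}$ summand in Lemma \ref{lem:poss}, which after summation via \eqref{eq:ab} leaves a residual $(-1)^{\omega_3}$ factor independent of the inner indices. The constraint $j_3 \leq j_1$ is forced by the middle case $i_l < \omega_3 < i_{l+1}$: telescoping the inner block $i_2, \ldots, i_l$ produces $(2^{j_3}-1)^{\omega_3-1}$ with $j_3 \leq l$, while the outer block $i_{l+1}, \ldots, i_k$ produces $(2^{j_1}-1)^{\omega_1}$ with $j_1 \geq l$. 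Verifying that all four cases (and the two subtractions) feed into exactly this form, with no unexpected cross-term surviving, is the only non-routine aspect of the argument.
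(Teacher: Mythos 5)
Your proposal matches the paper's proof: the paper likewise computes $R_k^{(1,1)}(P)$ directly via the four-case split on the position of $\omega_3$ relative to $i_2,\ldots,i_k$ (using Lemma \ref{lem:poss} for the parity counts and \eqref{eq:ab} to collapse the sums), treats $R_k^{(2,1)}(2P)$ "in a similar manner," and obtains the $(3,1)$ and $(2,3)$ counts by subtraction via the totally real analogue of \eqref{eq:sum_mod4} together with Lemma \ref{lem:R_k}. Your identification of where the $1/2^k$ versus $1/2^{k-1}$, the $(-1)^{\omega_3-1}$ terms, and the constraint $j_3\leq j_1$ come from is accurate and agrees with the paper's computation.
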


Using theorem \ref{theorem:discriminant} we can express $N_k(X)$ as a sum of terms from Theorem \ref{theorem:Q_k_mod4}.
\begin{theorem}\label{theorem:N_k_sum}
\begin{equation*}
\begin{split}
N_k(X)=&\sum_{\substack{\mu^2(P)=1,\, 2\nmid P\\ P^{2^{k-1}}\leq X}} Q_k^{(1,1)}(P) + \sum_{\substack{\mu^2(P)=1,\, 2\nmid P\\ (2^2P)^{2^{k-1}}\leq X}} Q_k^{(3,1)}(P)\\
+ &\sum_{\substack{\mu^2(P)=1,\, 2\nmid P\\ (2^2 2P)^{2^{k-1}}\leq X}} Q_k^{(2,1)}(2P) + \sum_{\substack{\mu^2(P)=1,\, 2\nmid P\\ (2^3 2P)^{2^{k-1}}\leq X}} Q_k^{(2,3)}(2P)
\end{split}
\end{equation*}
\end{theorem}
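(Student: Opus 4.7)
The plan is to partition all multi-quadratic number fields into the four congruence classes from \eqref{eq:mod4} and, in each class, rewrite the discriminant condition $D(K)\le X$ as a bound on the radical $P=\rad(a_1\cdots a_k)$ via Theorem \ref{theorem:discriminant}.

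First I would recall that, by Schmal, every multi-quadratic number field $K$ admits a presentation $(a_1,\ldots,a_k)$ satisfying \eqref{eq:mod4}; consequently $K$ falls into exactly one of the four types $(1,1)$, $(3,1)$, $(2,1)$ and $(2,3)$ (note that condition \eqref{eq:mod4} is imposed on a presentation, not on the normal presentation, so this classification is well-defined on $K$). Hence $N_k(X)$ decomposes as the sum of the four contributions coming from each type, and in each type I may further decompose according to the value of $P=\rad(a_1\cdots a_k)$, which is always square-free.

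Next, for each type I would translate the condition $D(K)\le X$. In the $(1,1)$ case we have $r=0$, so $D(K)=P^{2^{k-1}}$; since all $a_i$ are odd, $P$ is odd and square-free, giving the condition $P^{2^{k-1}}\le X$ and the sum $\sum Q_k^{(1,1)}(P)$ over odd square-free $P$ with $P^{2^{k-1}}\le X$. In the $(3,1)$ case $r=2$ and again $P$ is odd, yielding $(4P)^{2^{k-1}}\le X$. In the $(2,1)$ and $(2,3)$ cases exactly one of $a_1,\ldots,a_k$ is even, so the radical is $2P$ with $P$ odd square-free, and $r=2$ respectively $r=3$ give $(4\cdot 2P)^{2^{k-1}}\le X$ and $(8\cdot 2P)^{2^{k-1}}\le X$. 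By the definition of $Q_k^{(\cdot,\cdot)}(P)$ (which, for each congruence class, counts number fields with the prescribed radical), summing these four contributions then gives the claimed formula.

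The only real obstacle is purely bookkeeping: one must verify that the four classes do not overlap and that every $K$ is counted exactly once. Non-overlap follows because $(a_1,a_2)\bmod 4$ is determined by $K$ once one restricts to presentations satisfying \eqref{eq:mod4} (the normalisation of Schmal forces a single congruence class, up to the symmetry already absorbed in the definition of the four cases). Exhaustion is immediate from \eqref{eq:mod4}. Combined with the discriminant formula, this gives the four sums in the stated order, completing the proof.
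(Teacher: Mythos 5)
Your proposal is correct and follows essentially the same route the paper takes (the paper states this theorem with only a one-line justification, namely combining Theorem \ref{theorem:discriminant} with the fourfold classification from \eqref{eq:mod4}): partition fields by the congruence type of $(a_1,a_2)$ modulo $4$, read off $r$ and the parity of the radical in each case, and translate $D(K)\le X$ into the corresponding bound on $P$. Your extra remark that the four types are mutually exclusive (since they yield distinct powers of $2$ in the discriminant, an invariant of $K$) is a worthwhile clarification the paper leaves implicit.
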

In the same way $N_k^+(X)$ is calculated from the terms from Theorem \ref{theorem:R_k_mod4}.

\section{Asymptotic analysis}
In order to calculate the sum in Theorem \ref{theorem:N_k_sum} asymptotically, we need to calculate sums of the form
\begin{equation*}
A_M(x):=\sum_{\substack{n\leq x\\ \mu^2(n)=1,\, 2\nmid n}}M^{\omega(n)}
\end{equation*}
where $M\in\mathbb{N}$. For that we use Perrons formula in form of the following corollary from it (see Brüdern \cite{bruedern95}, p.28).
\begin{lemma}\label{lem:perron2}
Let $c>0$, $x\geq 2$, $T\geq 2$. Let $a:\mathbb{N}\to\mathbb{C}$ be an arithmetic function whose Dirichlet series is absolutely convergent for $s=c$. Then
\begin{multline*}
\sum_{n\leq x}a(n) = \frac{1}{2\pi i}\int_{c-iT}^{c+iT}\left(\sum_{n=1}^{\infty}a(n)n^{-s}\right)\frac{x^s}{s}ds\\
+ \mathcal{O}\left(\frac{x^c}{T}\sum_{n=1}^{\infty}|a(n)|n^{-c} + A_x\left(1+\frac{x\log x}{T}\right)\right)
\end{multline*}
where
\begin{equation*}
A_x=\max_{\frac{3}{4}x\leq n\leq \frac{5}{4}x} |a(n)|.
\end{equation*}
\end{lemma}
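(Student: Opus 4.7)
The plan is to apply the classical truncated Perron discontinuous integral to each Dirichlet coefficient and then sum over $n$. The key input is the identity
\begin{equation*}
\frac{1}{2\pi i}\int_{c-iT}^{c+iT}\frac{y^s}{s}\,ds = \delta(y) + \mathcal{O}\!\left(y^c\min\!\left(1,\tfrac{1}{T\,|\log y|}\right)\right),
\end{equation*}
valid for $y>0$, $y\neq 1$, where $\delta(y)=1$ if $y>1$ and $\delta(y)=0$ if $0<y<1$. This is a standard contour manipulation: for $y>1$ one closes the contour to the left of $\re(s)=c$, picking up the simple pole at $s=0$, while for $y<1$ one closes to the right. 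The horizontal segments contribute $\mathcal{O}(y^c/T)$, and estimating on the truncated vertical contour produces the $|\log y|^{-1}$ factor; the uniform bound $y^c$ (without $|\log y|$) comes from a trivial estimate and takes over near $y=1$.

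Next I would substitute $y=x/n$, multiply by $a(n)$, and sum over $n\geq 1$. Absolute convergence of $\sum a(n)n^{-s}$ at $\re(s)=c$ legitimises interchanging sum and integral, producing
\begin{equation*}
\sum_{n\leq x}a(n) = \frac{1}{2\pi i}\int_{c-iT}^{c+iT}\!\left(\sum_{n=1}^{\infty} a(n)n^{-s}\right)\frac{x^s}{s}\,ds + \mathcal{E}(x,T),
\end{equation*}
where any boundary contribution when $x$ is an integer is bounded by $|a(x)|\leq A_x$ and absorbed into $\mathcal{E}$.

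To estimate $\mathcal{E}(x,T)$ I would split the range of $n$ at $|n-x|=x/4$. For $n\notin[\tfrac{3}{4}x,\tfrac{5}{4}x]$, $|\log(x/n)|$ is bounded below by an absolute constant, so the contribution is
\begin{equation*}
\ll \frac{x^c}{T}\sum_{n=1}^{\infty} \frac{|a(n)|}{n^c},
\end{equation*}
which is exactly the first term of the stated error. For $n\in[\tfrac{3}{4}x,\tfrac{5}{4}x]$ one uses $|a(n)|\leq A_x$ and $|\log(x/n)|\asymp |x-n|/x$; the combinatorial sum $\sum_{1\leq|x-n|\leq x/4}\min(1,x/(T|x-n|))$ is bounded by $\mathcal{O}((x/T)\log x)$ (splitting at $|x-n|\asymp x/T$), and together with a single $\mathcal{O}(A_x)$ contribution from the integer closest to $x$, where the $\min$ saturates at $1$ because $|\log(x/n)|$ may be arbitrarily small, this yields $A_x(1+x\log x/T)$.

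The main obstacle is precisely this balancing between the two competing bounds inside the $\min$: the estimate $y^c/(T|\log y|)$ blows up as $n\to x$, forcing a switch to the trivial bound $y^c$ in a small neighbourhood, while a too generous use of the trivial bound destroys the $T$-saving away from $x$. Once the dyadic splitting is carried out, the two contributions match the stated error term exactly, and no further ingredients are required beyond absolute convergence at $s=c$ and the truncated Perron identity displayed above.
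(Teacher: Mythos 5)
The paper does not prove this lemma at all; it is quoted verbatim as a known corollary of Perron's formula from Br\"udern's textbook (p.~28). Your argument is the standard derivation of the truncated Perron formula --- the discontinuous integral $\frac{1}{2\pi i}\int_{c-iT}^{c+iT} y^s s^{-1}\,ds = \delta(y) + \mathcal{O}\bigl(y^c\min(1, T^{-1}|\log y|^{-1})\bigr)$, termwise substitution $y = x/n$ justified by absolute convergence at $s=c$, and the split of the error sum at $|n-x|\le x/4$ --- and it correctly reproduces both error terms, including the $A_x$ contribution from $n$ near $x$ where the logarithm degenerates. This is a correct proof of the cited result; the only cosmetic slip is that in the contour deformation it is the horizontal segments pushed to $\re s = -\infty$ (resp.\ $+\infty$) that produce the $y^c/(T|\log y|)$ bound, not an estimate on the vertical part, but this does not affect the argument.
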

Since $a(n)=\mathbbmss{1}_{2\nmid n} \mu^2(n) M^{\omega(n)}$ is multiplicative the corresponding Dirichlet series is
\begin{equation}\label{eq:dirichlet_series}
\sum_{2\nmid n}\frac{\mu^2(n) M^{\omega(n)}}{n^s} = \prod_{p\neq 2} \left(1+\frac{M}{p^s}\right) = H(s) \zeta(s)^M,
\end{equation}
where
\begin{equation}\label{eq:H(s)_def}
\begin{split}
H(s) &:= \zeta(s)^{-M} \prod_{p\neq 2}\left(1+\frac{M}{p^s}\right)\\
&= \left(1-\frac{1}{2^s}\right)^M \prod_{p\neq 2} \left(1+\frac{\alpha_2}{p^{2s}}+\ldots+\frac{\alpha_{M+1}}{p^{(M+1)s}}\right)
\end{split}
\end{equation}
with $\alpha_2,\ldots,\alpha_{M+1}\in\mathbb{Z}$. It is easy to see that $H(s)$ is holomorphic for $\re s>\frac{1}{2}$. Since $H(s)$ and $\zeta(s)$ converge for $s=1+\epsilon$ the Dirichlet series is absolutely convergent there and we can apply Lemma \ref{lem:perron2} with $c=1+\epsilon$ for any $\epsilon>0$.\\
Then, since $A_x\ll x^{\epsilon'}$ for any $\epsilon'>0$, the error term is bounded from above by $x^{1+\epsilon}/T+x^\epsilon$. In order to estimate the integral from above, we use the fact that there is a $\frac{1}{2}<\delta<1$ for every $M\in\mathbb{N}$ such that
\begin{equation}\label{eq:zeta_1}
\frac{1}{T}\int_{1}^T|\zeta(\delta +it)|^{M} dt = \mathcal{O}(1).
\end{equation}
This follows from Titchmarsh \cite{titchmarsh86}, p.151. Since $\zeta(s)$ has a pole of order 1 with residue 1 in $s=1$ the function $f(s):=H(s)\zeta(s)^Mx^s/s$ has a pole of order $M$ in $s=1$ and one can easily verify
\begin{equation*}
\underset{s=1}{\res} f = \frac{H(1)}{(M-1)!}xP_M'(\log x)
\end{equation*}
where $P_M'$ is a monic polynomial of degree $M-1$. Using the residue theorem we write the integral from Lemma \ref{lem:perron2} as
\begin{equation}\label{eq:integral}
\int_{c-iT}^{c+iT} f(s)ds = 2\pi i \,\underset{s=1}{\res} f + \int_{\delta-iT}^{\delta+iT} f(s)ds + \int_{\delta+iT}^{c+iT} f(s)ds + \int_{c-iT}^{\delta-iT} f(s)ds.
\end{equation}
As $H(s)$ is bounded for $\delta\leq \re s\leq 1+\epsilon$ we can estimate the first integral by
\begin{equation*}
x^\delta\left(1+\int_1^T |\zeta(\delta+it)^M|\frac{dt}{t}\right). 
\end{equation*}
From \eqref{eq:zeta_1} we know that $\int_{T/2}^T |\zeta(\delta +it)|^M \frac{dt}{t}=\mathcal{O}(1)$. By successively replacing $T$ by $T/2, T/4,\ldots$ and adding up we can finally estimate by $x^\delta (1+\log T)$.\\
Using the bound $\zeta(\sigma+it)\ll |t|^{(1-\omega)/2+\epsilon/2}$ for $0\leq \sigma\leq 1+\epsilon$ (see Brüdern \cite{bruedern95}) we can estimate the second and third integral in \eqref{eq:integral} by
\begin{equation*}
\frac{1}{T}\int_{\delta}^c |\zeta(\sigma +iT)|^M x^\sigma d\sigma
\ll \frac{T^{(1+\epsilon)M/2}}{T} \int_\delta^c \left(\frac{x}{T^{M/2}}\right)^\sigma d\sigma 
\ll \frac{x^{1+\epsilon}}{T}
\end{equation*}
where the last estimation holds for $x>T^{M/2}$. Finally, by choosing $T=x^{1/M}$ and $\epsilon$ small enough we see that all integrals in \eqref{eq:integral} as well as the error term in Lemma \ref{lem:perron2} are bounded by $\mathcal{O}(x^d)$. Hence we get
\begin{equation*}
A_M(x) = \frac{H(1)}{(M-1)!}x P_M'(\log x) + \mathcal{O}(x^d).
\end{equation*}
Using this and Theorem \ref{theorem:Q_k_mod4} we get
\begin{multline*}
\sum_{\substack{\mu^2(P)=1,\, 2\nmid P\\ P^{2^{k-1}}\leq X}} Q_k^{(1,1)}(P) = \frac{F_k}{2^k-1} A_{(2^k-1)}\left( x^{1/2^{k-1}} \right) + \ssum_{j=1}^{k-1} A_{(2^j-1)}\left( x^{1/2^{k-1}}\right) \\
= \frac{F_k}{2^k-1} \frac{H(1)}{(2^k-2)!} x^{1/2^{k-1}} \left(\frac{1}{2^{k-1}}\right)^{2^k-2}\widetilde{P}_k(\log x) + \mathcal{O}\left(x^{d/2^{k-1}}\right) 
\end{multline*}
where $\widetilde{P}_k$ is a monic polynomial of degree $2^k-2$. Accordingly we derive formulas for the other sums in Theorem \ref{theorem:N_k_sum} which differ in the leading factor. By adding all up we conclude
\begin{equation*}
N_k(x) = C_k x^{1/2^{k-1}} P_k(\log x) + \mathcal{O}\left(x^{d/2^{k-1}}\right),
\end{equation*}
where $P_k$ is again a monic polynomial of degree $2^k-2$ and
\begin{equation*}
C_k = \left(\frac{1}{2^k-1}+\frac{1}{4} + \frac{1}{4} +\frac{2^k-2}{16}\right) F_k \frac{H(1)}{(2^k-2)!} \left(\frac{1}{2^{k-1}}\right)^{2^k-2}.
\end{equation*}
This proves part of Theorem \ref{theorem:main_result}. Calculating $N_k^+(x)$ works in the same way. However, this time we also need to sum up terms from Theorem \ref{theorem:R_k_mod4} of the form $M^{\omega_1(n)} N^{\omega_3(n)}$ for odd $M\in\mathbb{N}$ and $N\in\mathbb{N}\cup \{-1\}$ with $M\geq N$. The Dirichlet series for these is
\begin{equation*}
\sum_{2\nmid n}\frac{\mu^2(n) M^{\omega_1(n)}N^{\omega_3(n)}}{n^s} 
= H^+(s) L(s,\chi)^{(M-N)/2}\zeta(s)^{(M+N)/2}.
\end{equation*}
All estimates work just as for $A_M(x)$. However, since $L(s,\chi)$ is holomorphic in $s=1$ we this time integrate over a function with a pole of degree $(M+N)/2$. That means $A_{M,N}(x)$ will not contribute to the leading term of the final result because $M\leq 2^{k-1}-1, N\leq 2^k-1$ in Theorem \ref{theorem:R_k_mod4}. This furnishes the claim for $N_k^+(x)$ and completes the proof of Theorem \ref{theorem:main_result}.

\end{document}